\newtheorem{theorem}{Theorem}[section]
\newtheorem{lemma}[theorem]{Lemma}
\newtheorem{corollary}[theorem]{Corollary}
\theoremstyle{definition}
\theoremstyle{remark}
\theoremstyle{remark}
\numberwithin{equation}{section}
\def\subsection{\@startsection{subsection}{2}%
	\z@{.5\linespacing\@plus.7\linespacing}
	{.5\baselineskip}%
	{\normalfont\centering\scshape}%
}
\let\div\undefined
\DeclareMathOperator{\div}{div}
\let\vol\undefined
\DeclareMathOperator{\vol}{vol}
\DeclareMathOperator{\sff}{\mathrm{I\!\,I}}
\let\tr\undefined
\DeclareMathOperator{\tr}{tr}
\title[A logarithmic Sobolev inequality]{A logarithmic Sobolev inequality for closed submanifolds with constant length of mean curvature vector}
\author{Doanh Pham}
\address{Faculty of Mathematics and Applications, Saigon University, 273 An Duong Vuong Street, District 5, Ho Chi Minh City, Vietnam}
\email{ptdoanh@sgu.edu.vn}
\begin{document}
\maketitle
\begin{abstract}
In this paper, we prove a logarithmic Sobolev inequality for closed submanifolds with constant length of mean curvature vector in a manifold with nonnegative sectional curvature.
\end{abstract}
	
\section{Introduction}
In \cite{Brendle_cpam2022}, Brendle proved a sharp logarithmic Sobolev inequality for closed submanifolds in the Euclidean space. Later, following the strategies in \cite{Brendle_cpam2023}, several works \cite{Dong-Lin-Lu_ActaSci2024, Lee-Ricci_jga2024, Yi-Zheng_ChineseAnnals2024} extended the result in \cite{Brendle_cpam2022} to submanifolds of a manifold satisfying certain nonnegativity assumptions on curvatures. In \cite{Pham_imrn2024}, the author proved a logarithmic Sobolev inequality for closed minimal submanifolds of the unit sphere in the Euclidean space:

\begin{theorem}[\cite{Pham_imrn2024}]\label{Pham2024 logSobolev for all m}
Let $n, m \in \mathbb N$ and $\Sigma$ be a closed $n$-dimensional minimal submanifold of $\mathbb S^{n+m}$. Suppose that $f$ is a positive smooth function on $\Sigma$. If $m = 1, 2$, we have
\begin{equation}\label{eq:Pham2024 logSobolev for m=1,2}
\int_\Sigma f\left(\log f + \log(|\mathbb S^n|)\right) - \left(\int_\Sigma f \right) \log \left(\int_\Sigma f \right) \leq \frac{n+1}{2n^2} \int_\Sigma \frac{|\nabla^\Sigma f|^2}{f}.
\end{equation}
If $m \geq 3$, we have
\begin{equation}\label{eq:Pham2024 logSobolev for all m}
\int_\Sigma f\left(\log f + \log\left((n+1)\frac{|\mathbb S^{n+m}|}{|\mathbb S^{m-1}|}\right) \right) - \left(\int_\Sigma f \right) \log \left(\int_\Sigma f \right) \leq \frac{n+1}{2n^2} \int_\Sigma \frac{|\nabla^\Sigma f|^2}{f}.
\end{equation}
When $m=1, 2$, the equality in (\ref{eq:Pham2024 logSobolev for m=1,2}) holds if and only if $f$ is a constant and $\Sigma$ is totally geodesic.
\end{theorem}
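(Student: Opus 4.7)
The strategy is the Alexandrov--Bakelman--Pucci (ABP) optimal-transport method of Brendle \cite{Brendle_cpam2022, Brendle_cpam2023}, adapted to the spherical ambient; the minimality of $\Sigma$ in $\mathbb S^{n+m}$ and the positive sectional curvature of the ambient sphere are the two geometric inputs one would exploit throughout.

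\emph{Setup.} First, on the closed manifold $\Sigma$ solve the weighted divergence equation
\[
\div_\Sigma(f\nabla^\Sigma u) \;=\; f\log f - \Bigl(\tfrac{\int_\Sigma f\log f}{\int_\Sigma f}\Bigr)f,
\]
which has zero-mean right-hand side and so admits a solution $u\in C^{2,\alpha}(\Sigma)$ by standard elliptic theory. Then introduce the transport map
\[
\Phi:\Sigma\times \overline B{}^{\,m} \longrightarrow \mathbb S^{n+m}, \qquad \Phi(x,\xi) \;=\; \exp^{\mathbb S^{n+m}}_x\!\bigl(\nabla^\Sigma u(x)+\xi\bigr),
\]
where $\overline B{}^{\,m}$ is the closed unit ball in $T_x^\perp\Sigma\subset T_x \mathbb S^{n+m}$ and $\exp^{\mathbb S^{n+m}}$ is the spherical exponential. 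Let $\mathcal C$ denote the usual ABP contact set (the locus where $\nabla^2_\Sigma u$ satisfies the second-derivative bound forced by $\Phi$). A standard minimisation argument would show that $\Phi(\mathcal C)$ exhausts $\mathbb S^{n+m}$ when $m\le 2$ and covers a controlled subset of $\mathbb S^{n+m}$ when $m\ge 3$; this asymmetry between small- and large-codimension covering is the origin of the two additive constants in the theorem.

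\emph{Jacobian estimate.} The Jacobian $\jac\Phi$ at $(x,\xi)$ splits into tangential and normal factors: the normal factor carries a spherical Jacobi-field term $(\sin r/r)^{m-1}$ with $r=|\nabla^\Sigma u(x)+\xi|$, while the tangential factor is bounded by $\bigl(\tfrac1n\tr[\cdot]\bigr)^n$ via AM--GM, the trace involving $\Delta_\Sigma u$ together with the spherical correction $n\cot r$; the term $\langle\xi,\vec H_\Sigma\rangle$ drops out by minimality. Substituting the PDE to replace $\Delta_\Sigma u+\langle\nabla^\Sigma \log f,\nabla^\Sigma u\rangle$ by $\log f-(\text{const})$ and integrating the resulting bound against the volume identity $\int_{\mathcal C}\jac\Phi\ge |\Phi(\mathcal C)|$ produces the asserted log-Sobolev inequality; the sharp coefficient $\frac{n+1}{2n^2}$ emerges from balancing the AM--GM exponent against the spherical Jacobi correction in the radial variable $r$. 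Equality in AM--GM together with a rigidity analysis of the normal Jacobi fields would then force $\Sigma$ to be totally geodesic and $f$ to be constant, giving the rigidity clause in the $m\le 2$ case.

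\emph{Main obstacle.} The hardest step is calibrating the transport map: one must choose the radius of $\overline B{}^{\,m}$ and the relative scaling between $\nabla^\Sigma u$ and $\xi$ so that simultaneously (i) $\Phi$ is surjective onto the correct region of $\mathbb S^{n+m}$, (ii) the AM--GM estimate is sharp, and (iii) the spherical corrections $\sin r,\cos r$ assemble into exactly the factor $\frac{n+1}{2n^2}$ rather than the Euclidean $\frac{1}{2n}$. This calibration is also what forces the dichotomy $m\le 2$ vs.\ $m\ge 3$: in small codimension the normal ball inflates to hit a tubular neighbourhood of a totally geodesic $\mathbb S^n\subset \mathbb S^{n+m}$, yielding the sharp $\log|\mathbb S^n|$ constant together with rigidity, whereas in large codimension one can only reach the weaker constant $\log((n+1)|\mathbb S^{n+m}|/|\mathbb S^{m-1}|)$ and loses the rigidity clause.
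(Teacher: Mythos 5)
The statement you are proving is quoted from \cite{Pham_imrn2024}; the present paper does not reprove it directly but observes (in the paragraph after Theorem \ref{logSobolev}) that it is a special case of Theorem \ref{logSobolev} with $M=\mathbb R^{n+m+1}$, because a closed minimal submanifold of $\mathbb S^{n+m}$ has mean curvature vector (computed in $\mathbb R^{n+m+1}$) equal to the negative of the position vector, hence of unit length. The proof of Theorem \ref{logSobolev} itself is a Euclidean-style ABP argument in the ambient $M$, where the transport map $\Phi_r(x,y,t)=\exp_x(r\nabla^\Sigma u(x)+ry+rtH)$ explicitly carries a $tH$ component alongside the tangential component $\nabla^\Sigma u$ and the reduced normal component $y\in\widetilde T_x^\perp\Sigma$.

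Your proposal is genuinely different: you work \emph{intrinsically} in $\mathbb S^{n+m}$, using the spherical exponential, spherical Jacobi fields ($\sin r/r$ factors), and $\cot r$ corrections. This is not merely a stylistic divergence; it misses the one idea that actually drives the result, namely the passage to $\mathbb R^{n+m+1}$ through the identity $H=-x$. Concretely, there are two places where this shows up as a genuine gap. First, the coefficient $\frac{n+1}{2n^2}$ is produced in the paper because, after embedding, the transport map carries $n+1$ ``effective tangential'' directions: the $n$ directions of $T_x\Sigma$ plus the $H$-direction. This is what makes the trace estimate close with the PDE $\div(f\nabla^\Sigma u)=\frac{n}{n+1}f\log f-\frac{1}{2n}\frac{|\nabla^\Sigma f|^2}{f}$ (note the $\frac{n}{n+1}$ and $\frac{1}{2n}$ built into the right-hand side), and these two constants are precisely what assemble into $\frac{n+1}{2n^2}$. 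An intrinsic spherical ABP with transport only in the $n+m$-dimensional tangent space of $\mathbb S^{n+m}$ would naturally reproduce the Euclidean coefficient $\frac{1}{2n}$, and your claim that ``the sharp coefficient $\frac{n+1}{2n^2}$ emerges from balancing the AM--GM exponent against the spherical Jacobi correction in the radial variable $r$'' does not identify any mechanism that would upgrade $\frac{1}{2n}$ to $\frac{n+1}{2n^2}$. You also use the PDE $\div_\Sigma(f\nabla^\Sigma u)=f\log f-cf$, which lacks the $\frac{n}{n+1}$ and $\frac{1}{2n}$ weights and therefore cannot feed the right constants into the trace bound.

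Second, the dichotomy $m\le 2$ versus $m\ge 3$ in the paper comes from a very concrete place: after fixing $t$ (the $H$-component), the remaining normal variable $y$ ranges over $\widetilde T_x^\perp\Sigma$, which has dimension $m-1$, and the annular slice volume is estimated by $b^{\frac{m-1}{2}}-a^{\frac{m-1}{2}}\le\frac{m-1}{2}(b-a)$, valid only when $\frac{m-1}{2}\ge1$, i.e.\ $m\ge3$. For $m\le2$ one first inflates the ambient to $M\times\mathbb R$ to increase $m$, using that this does not change the asymptotic volume ratio. Your explanation --- that for $m\le2$ the transport ``exhausts'' $\mathbb S^{n+m}$ while for $m\ge3$ it only reaches a ``controlled subset'' --- is not the operative mechanism and, as stated, would not produce the specific constants $\log|\mathbb S^n|$ and $\log\bigl((n+1)|\mathbb S^{n+m}|/|\mathbb S^{m-1}|\bigr)$.

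To repair the proposal you would need to (i) replace the spherical intrinsic transport by the Euclidean transport in $\mathbb R^{n+m+1}$ with the extra $tH$ direction furnished by $H=-x$, (ii) normalize $f$ so that $\frac{n}{n+1}\int_\Sigma f\log f=\frac{1}{2n}\int_\Sigma\frac{|\nabla^\Sigma f|^2}{f}$ and solve the corresponding weighted PDE, and (iii) carry out the slice-volume estimate in the $(m-1)$-dimensional reduced normal bundle to see the $m\ge3$ threshold; these are not cosmetic adjustments but replace the core of your argument.
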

Here, $\mathbb S^n$ denotes the unit sphere in the Euclidean space $\mathbb R^{n+1}$ which is the boundary of the unit ball $\mathbb B^{n+1} := \{x \in \mathbb R^{n+1}: |x| < 1\}$. In the cases when $m = 1, 2$, Theorem \ref{Pham2024 logSobolev for all m} may be compared with the classical result (see e.g. \cite{Beckner_Annals1993}) stating that
\begin{equation*}
\int_{\mathbb S^n} f\left(\log f + \log(|\mathbb S^n|) \right) - \left(\int_{\mathbb S^n}  f \right) \log \left(\int_{\mathbb S^n}  f \right) \leq \frac{1}{2n} \int_{\mathbb S^n}  \frac{|\nabla^{\mathbb S^n} f|^2}{f}
\end{equation*}
for every positive smooth function $f$ on $\mathbb{S}^n$ and the equality holds if and only if $f$ is a constant.

For a complete noncompact $k$-dimensional Riemannian manifold $M$ with nonnegative Ricci curvature, the asymptotic volume ratio is defined by
$$\theta \coloneqq \lim\limits_{r \to \infty} \frac{|B(p, r)|}{|\mathbb B^k| r^k},$$
where $p$ is an arbitrary point on $M$ and $B(p,r)$ denotes the geodesic ball of radius $r$ centered at $p$. To see that the limit in the definition of $\theta$ exists, one may recall Bishop-Gromov volume comparison theorem which shows that the function $r \mapsto \frac{|B(p, r)|}{|\mathbb B^k| r^k}$ is non-increasing and converges to $1$ as $ r \to 0^+$. Thus $\theta$ is well-defined and $0 \leq \theta \leq 1$. The main result of this paper is the following statement:

\begin{theorem}\label{logSobolev}
Let $n, m \in \mathbb N$, and $(M,g)$ be a complete noncompact Riemannian manifold of dimension $n+m$ with nonnegative sectional curvature and asymptotic volume ratio $\theta > 0$. Suppose that $\Sigma$ is a closed $n$-dimensional submanifold of $M$ such that the mean curvature vector $H$ of $\Sigma$ satisfies $|H| = 1$.  Furthermore, assume that $f$ is a positive smooth function on $\Sigma$. If $m =1, 2$, we have
\begin{equation}\label{eq:logSobolev m = 1}
	\int_\Sigma f\left(\log f + \log(|\mathbb S^n|) + \log \theta \right) - \left(\int_\Sigma f \right) \log \left(\int_\Sigma f \right) \leq \frac{n+1}{2n^2} \int_\Sigma \frac{|\nabla^\Sigma f|^2}{f}.
\end{equation}
If $m \geq 3$, we have
\begin{align}
\int_\Sigma f\left(\log f + \log\left((n+1)\frac{|\mathbb S^{n+m-1}|}{|\mathbb S^{m-2}|}\right) + \log \theta \right) - & \left(\int_\Sigma f \right) \log \left(\int_\Sigma f \right) \nonumber \\
& \qquad \leq \frac{n+1}{2n^2} \int_\Sigma \frac{|\nabla^\Sigma f|^2}{f}. \label{eq:logSobolev all m}
\end{align}
In either case, the equality holds if and only if $f$ is a constant, $\Sigma$ is totally umbilical, and
$$
|\Sigma| = 
\begin{cases}
\theta |\mathbb S^n| & \text{if } m = 1, 2,\\
\theta (n+1)\frac{|\mathbb S^{n+m-1}|}{|\mathbb S^{m-2}|} & \text{if } m \geq 3.
\end{cases}
$$

\end{theorem}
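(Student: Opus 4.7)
The strategy will be to adapt the ABP/optimal transport method of Brendle, combining the log Sobolev PDE setup of \cite{Pham_imrn2024} with the asymptotic-volume-ratio technique of \cite{Brendle_cpam2023} that produces the $\log \theta$ defect. After rescaling so that $\int_\Sigma f = 1$, the first step is to solve on the closed submanifold $\Sigma$ an elliptic divergence-form equation of the form
\begin{equation*}
\div_\Sigma(f \nabla^\Sigma u) = f \log f - \left(\int_\Sigma f \log f\right) f,
\end{equation*}
which is solvable because the right-hand side integrates to zero over $\Sigma$. One then sets up the transport map
$$\Phi(x, y) = \exp_x\!\left(\nabla^\Sigma u(x) + y\right), \qquad x \in \Sigma,\ y \in U(x) \subset N_x\Sigma,$$
where $U(x)$ is a subset of the normal fiber whose shape depends on the codimension regime ($m \in \{1,2\}$ versus $m \geq 3$), designed so that $\Phi$ covers essentially all of $M$ in the limit as a truncation radius $R$ tends to infinity.

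Following \cite{Brendle_cpam2023}, the next step is to identify a contact set $A \subset \Sigma \times N\Sigma$ on which the geodesic $t \mapsto \exp_x(t(\nabla^\Sigma u(x) + y))$ is length-minimizing and $\Phi$ is a local diffeomorphism of nonnegative Jacobian. As $R \to \infty$, $\Phi(A) \cap B(p, R)$ has $(n+m)$-volume at least $\theta |\mathbb{B}^{n+m}| R^{n+m}$ asymptotically, by the very definition of $\theta$. On the other hand, the change of variables formula yields the upper bound $\int_A |\jac \Phi|$, and $|\jac \Phi|$ factors into a tangential determinant involving the Hessian $D^2 u$ and the second fundamental form paired with $y$, times a radial Jacobi-field factor no larger than its Euclidean counterpart by Rauch comparison under nonneg sectional curvature.

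Take logarithms of the Jacobian bound, apply $\log \det A \leq n \log(\tr A/n)$ (the step that turns the Sobolev-type argument into a log Sobolev one), substitute the PDE into the tangential trace $\Delta^\Sigma u + \langle y, H\rangle$, use $|H| = 1$, and apply Cauchy-Schwarz to absorb $\int_\Sigma f |\nabla^\Sigma u|^2$ into the term $\frac{n+1}{2n^2} \int_\Sigma |\nabla^\Sigma f|^2/f$ on the right-hand side of (\ref{eq:logSobolev m = 1})--(\ref{eq:logSobolev all m}). Integrating over $U(x)$ produces the sphere-volume prefactors ($|\mathbb{S}^n|$ when $m = 1, 2$, or $(n+1)|\mathbb{S}^{n+m-1}|/|\mathbb{S}^{m-2}|$ when $m \geq 3$), while comparing with $\theta |\mathbb{B}^{n+m}| R^{n+m}$ yields the $\log \theta$ correction on the left-hand side.

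The main obstacle will be carrying out the Jacobian estimate in its logarithmic (rather than polynomial) form, and correctly identifying $U(x)$ in each codimension regime so that the normal-fiber integration gives exactly the stated sphere-volume constants; the choice for $m=1,2$ is delicate because the normal disk is too thin to contribute a full factor, so one has to use the $H$-direction carefully. Equality cases are traced back through the chain of inequalities: AM-GM saturation forces $\Sigma$ to be totally umbilical, the sharpness of the pointwise Jacobian bound together with the PDE forces $f$ to be constant, and equality in the AVR lower bound pins $|\Sigma|$ down to the stated value.
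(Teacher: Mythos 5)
The high-level plan — an ABP-style transport map $\Phi$, the covering argument as $r\to\infty$ that injects the asymptotic volume ratio, the Jacobi-field/matrix-Riccati bound on $|\jac\Phi|$, and reading the rigidity back through the chain of inequalities — is indeed the paper's strategy. But there is a genuine gap in your setup that derails the calculation: the PDE and normalization are wrong.

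You propose normalizing $\int_\Sigma f = 1$ and solving
\begin{equation*}
\div_\Sigma(f\nabla^\Sigma u) = f\log f - \Bigl(\int_\Sigma f\log f\Bigr) f,
\end{equation*}
then intending to ``apply Cauchy--Schwarz at the end to absorb $\int_\Sigma f|\nabla^\Sigma u|^2$ into $\frac{n+1}{2n^2}\int_\Sigma |\nabla^\Sigma f|^2/f$.'' This cannot work structurally. In the ABP method the Laplacian $\Delta_\Sigma u$ enters the Jacobian estimate \emph{pointwise} inside a power $\bigl(1 + r(\cdots)\bigr)^n$; it never appears linearly under a single integral, so there is no place to perform a terminal global Cauchy--Schwarz. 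Instead, the paper normalizes $f$ by $\frac{n}{n+1}\int_\Sigma f\log f = \frac{1}{2n}\int_\Sigma \frac{|\nabla^\Sigma f|^2}{f}$ (which reduces the entire claimed inequality to the single estimate $\theta(n+1)\tfrac{|\mathbb{S}^{n+m-1}|}{|\mathbb{S}^{m-2}|}\le\int_\Sigma f$), and solves
\begin{equation*}
\div(f\,\nabla^\Sigma u) = \frac{n}{n+1} f \log f - \frac{1}{2n} \frac{|\nabla^\Sigma f|^2}{f},
\end{equation*}
which is solvable precisely because of that scaling. The term $-\frac{1}{2n}|\nabla^\Sigma f|^2/f$ on the right is what allows the \emph{pointwise} completion of the square: $\Delta_\Sigma u = \frac{n}{n+1}\log f + \frac{n}{2}|\nabla^\Sigma u|^2 - \frac{1}{2}\bigl|\frac{1}{\sqrt n}\frac{\nabla^\Sigma f}{f} + \sqrt n \nabla^\Sigma u\bigr|^2$, leading after the elementary inequalities $\log\lambda\le\lambda-1$ and $\sqrt{1-\theta}\le 1-\theta/2$ to the clean pointwise bound $\Delta_\Sigma u(x)\le n\bigl(f(x)^{1/(n+1)} - \sqrt{1-|\nabla^\Sigma u(x)|^2}\bigr)$ on $\{|\nabla^\Sigma u|<1\}$. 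This is exactly what feeds the Jacobian bound $|\det D\Phi_r|\le r^m\bigl(1+r(f^{1/(n+1)}-\sqrt{1-|\nabla^\Sigma u|^2}-t)\bigr)^n$, whose integral in $t$ and $y$ collapses to $\tfrac{r^{m-1}}{n+1}(1+rf^{1/(n+1)})^{n+1}$ and, after $r\to\infty$ and $\sigma\to 1$, to $\int_\Omega f$. Your PDE produces none of this structure.

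Two further mismatches worth noting. First, your Jacobian step ``$\log\det A \le n\log(\tr A/n)$'' is not how the log--Sobolev nature enters; the paper uses the polynomial bound $\det(g_\Sigma + sA) \le (1+\frac{s}{n}\tr A)^n$ arising from the Riccati comparison and the arithmetic--harmonic mean inequality, with the log--Sobolev feature already encoded in the choice of PDE and the elementary inequalities above. Second, you frame the $m=1,2$ case as a delicate direct computation in a thin normal disk. The paper bypasses this entirely: it proves the inequality only for $m\ge 3$ (for which the $y$-fiber is $(m-1)$-dimensional and the annulus estimate $|\mathbb B^{m-1}|\frac{m-1}{2}(1-\sigma^2)$ applies), and then obtains $m=1,2$ by passing to the ambient manifold $\widetilde M = M\times\mathbb{R}$, which has the same $\theta$, observing that $m=3$ specializes the constant to $\log|\mathbb{S}^n|$. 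You should use that reduction rather than attempt a direct low-codimension argument.
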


Despite being stated separately for exposition purposes, one may realize that (\ref{eq:logSobolev m = 1}) follows from (\ref{eq:logSobolev all m}). To see this, we note that when $m = 3$, the constant on the left-hand side of (\ref{eq:logSobolev all m}) reduces to $\log((n+1)|\mathbb S^{n+2}|/|\mathbb S^{1}|) = \log(|\mathbb S^n|)$. In the case $m = 1, 2$, the assertion follows by considering the larger ambient manifold $\widetilde{M} = M \times \mathbb R$ whose asymptotic volume ratio is the same as that of $M$.

When $M = \mathbb R^{n+m+1}$, Theorem \ref{logSobolev} is already a generalization of Theorem \ref{Pham2024 logSobolev for all m}. To see this, one just needs to note that the mean curvature vector with respect to $\mathbb R^{n+m+1}$ of a minimal submanifold of $\mathbb S^{n+m}$ is the opposite of the position vector. The combination of this observation, the estimates in \cite{Pham_imrn2024}, and the strategies in \cite{Brendle_cpam2023} is the key idea of the proof of the main result.

\section{Proof of the main result}
Let $n, m \in \mathbb N$, $m \geq 3$. From now till the end of this paper, we assume that $(M,g)$ is a complete noncompact Riemannian manifold of dimension $n+m$ with nonnegative sectional curvature and asymptotic volume ratio $\theta > 0$. The Levi-Civita connection on $M$ is denoted by $\overline{\nabla}$. Let $\Sigma$ be a closed $n$-dimensional submanifold of $M$ and $g_\Sigma$ be its induced metric. We denote by $\sff$ the the second fundamental form on $\Sigma$ and assume further that the mean curvature vector $H \coloneqq \tr(\sff)/n$ of $\Sigma$ satisfies $|H| = 1$. For each $x \in \Sigma$, we denote by $T_x^\perp\Sigma$ the space of normal vectors at $x$ and $\widetilde{T}_x^\perp\Sigma \coloneqq \{V \in T_x^\perp\Sigma : \langle V, H \rangle = 0\}$.
\subsection{Proof of inequality (\ref{eq:logSobolev all m})}
In this subsection, we prove inequality (\ref{eq:logSobolev all m}). We first suppose that $\Sigma$ is connected. Let $f$ be a positive smooth function on $\Sigma$. Since the inequality is invariant under multiplying $f$ by a positive constant, we may assume without loss of generality that $f$ satisfies
\begin{equation}\label{eq:scaling}
\frac{n}{n+1} \int_\Sigma f \log f = \frac{1}{2n} \int_\Sigma \frac{|\nabla^\Sigma f|^2}{f}.
\end{equation}
With this scaling, we need to show
\begin{equation}\label{eq:goal of proof}
	\theta (n+1)\frac{|\mathbb S^{n+m-1}|}{|\mathbb S^{m-2}|} \leq \int_\Sigma f.
\end{equation}
Since $f > 0$ and $\Sigma$ is connected, by (\ref{eq:scaling}), there exists (uniquely modulo a constant) a smooth solution $u$ to the equation
$$\div(f\,\nabla^\Sigma u) = \frac{n}{n+1} f \log f - \frac{1}{2n} \frac{|\nabla^\Sigma f|^2}{f} \quad \text{on }\; \Sigma.$$
We define some sets
\begin{align*}
\Omega & \coloneqq \{x \in \Sigma: |\nabla^\Sigma u(x)| < 1 \}\\
U & \coloneqq \{(x,y,t): x \in \Sigma,\, y \in \widetilde{T}_x^\perp\Sigma,\, t \in \mathbb{R} \text{ such that }  |\nabla^\Sigma u(x)|^2 + |y|^2 + t^2 < 1 \}.
\end{align*}
Furthermore, for each $r > 0$, we define
\begin{align*}
	A_r \coloneqq \Bigl\{(x, y, t)& \in U \text{ such that } \\
	&ru(z) + \frac{1}{2} d(z, \exp_x(r\nabla^\Sigma u(x) + ry + rtH))^2\\
	& \qquad \quad \geq ru(x) + \frac{1}{2} r^2(|\nabla^\Sigma u(x)|^2 + |y|^2 + t^2) \text{ for all } z \in \Sigma \Bigr\}.
\end{align*}
We also define the map $\Phi_r : \tilde{T}^\perp\Sigma \times \mathbb{R} \to M$ by
$$\Phi_r(x,y,t) = \exp_x(r \nabla^\Sigma u(x) + ry + rtH).$$
The proof of the following Lemma is taken from \cite[Lemma 2.4]{Pham_imrn2024}.

\begin{lemma}\label{Estimate of Delta u}
For every $x \in \Omega$, we have
$$\Delta_\Sigma u(x) \leq n\left(f(x)^{\frac{1}{n+1}} - \sqrt{1 - |\nabla^\Sigma u(x)|^2}\right).$$
\end{lemma}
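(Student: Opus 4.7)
The idea is to turn the statement into a pointwise scalar inequality by unpacking the PDE for $u$ and eliminating the cross term $\langle\nabla^\Sigma f,\nabla^\Sigma u\rangle$ via completion of the square. First I would expand the divergence in the defining equation of $u$, namely
$$f\,\Delta_\Sigma u+\langle\nabla^\Sigma f,\nabla^\Sigma u\rangle=\frac{n}{n+1}\,f\log f-\frac{1}{2n}\frac{|\nabla^\Sigma f|^2}{f},$$
and divide by $f>0$ to solve for $\Delta_\Sigma u$. Substituting into the conjectured bound and rearranging, the claim is equivalent to
$$\frac{n}{n+1}\log f-nf^{1/(n+1)}+n\sqrt{1-|\nabla^\Sigma u|^2}\leq \frac{1}{2n}\frac{|\nabla^\Sigma f|^2}{f^2}+\frac{\langle\nabla^\Sigma f,\nabla^\Sigma u\rangle}{f}.$$

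Next I would complete the square on the right-hand side via
$$\frac{1}{2n}\frac{|\nabla^\Sigma f|^2}{f^2}+\frac{\langle\nabla^\Sigma f,\nabla^\Sigma u\rangle}{f}=\frac{1}{2n}\left|\frac{\nabla^\Sigma f}{f}+n\nabla^\Sigma u\right|^2-\frac{n}{2}|\nabla^\Sigma u|^2\geq -\frac{n}{2}|\nabla^\Sigma u|^2.$$
So it suffices to establish the pointwise bound
$$\frac{n}{n+1}\log f-nf^{1/(n+1)}+n\sqrt{1-|\nabla^\Sigma u|^2}+\frac{n}{2}|\nabla^\Sigma u|^2\leq 0.$$

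Finally, setting $s\coloneqq f^{1/(n+1)}>0$ and $t\coloneqq |\nabla^\Sigma u|^2\in[0,1)$ (the latter using $x\in\Omega$), the problem reduces to the one-variable inequality $\log s-s+\sqrt{1-t}+t/2\leq 0$. This splits into two elementary bounds: $\log s\leq s-1$ for all $s>0$, and $\sqrt{1-t}+t/2\leq 1$ for $t\in[0,1]$, the latter verified by noting equality at $t=0$ together with the fact that the derivative $-\tfrac{1}{2\sqrt{1-t}}+\tfrac{1}{2}$ is nonpositive on $[0,1)$. Adding these gives exactly the desired inequality.

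The only nontrivial move is recognizing the completion of the square that absorbs $\nabla^\Sigma f$ at the cost of the $-\tfrac{n}{2}|\nabla^\Sigma u|^2$ term, which is precisely calibrated to combine with $n\sqrt{1-|\nabla^\Sigma u|^2}$ and yield a bound independent of $\nabla^\Sigma u$; after that the argument is purely algebraic. This also suggests that the particular scaling (\ref{eq:scaling}) and the coefficient $n/(n+1)$ in the PDE are chosen so that the resulting scalar inequality is sharp, with equality along the curves $\log s=s-1$ and $t=0$, i.e.\ $f\equiv 1$ and $\nabla^\Sigma u\equiv 0$.
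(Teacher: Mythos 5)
Your proposal is correct and follows essentially the same argument as the paper: expand the divergence equation for $u$, complete the square to absorb the cross term $\langle\nabla^\Sigma f,\nabla^\Sigma u\rangle$ at the cost of a $-\tfrac{n}{2}|\nabla^\Sigma u|^2$ term, and then apply the two elementary inequalities $\log\lambda\leq\lambda-1$ and $\sqrt{1-\theta}\leq 1-\theta/2$. The only difference is cosmetic (you rearrange to a one-sided scalar inequality before invoking those bounds, whereas the paper keeps $\Delta_\Sigma u$ isolated throughout).
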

\begin{proof}
For every point $x \in \Sigma$, the equation of $u$ implies
\begin{align*}
\Delta_\Sigma u(x) &= \frac{n}{n+1} \log f(x) - \frac{1}{2n}\frac{|\nabla^\Sigma f|^2}{f^2} - \left\langle  \frac{\nabla^\Sigma f}{f}, \nabla^\Sigma u \right\rangle.\\
& = \frac{n}{n+1} \log f(x) + \frac{n}{2} |\nabla^\Sigma u|^2 - \frac{1}{2}\left| \frac{1}{\sqrt{n}}\frac{\nabla^\Sigma f}{f} + \sqrt{n} \nabla^\Sigma u \right|^2\\
& \leq \frac{n}{n+1} \log f(x) + \frac{n}{2} |\nabla^\Sigma u|^2.
\end{align*}
Using the elementary inequalities
$$\log \lambda \leq \lambda - 1 \text{\; for } \lambda > 0 \quad \text{and} \quad \sqrt{1 - \theta} \leq 1 -\frac{\theta}{2} \text{\; for } 0 \leq \theta \leq 1,$$
and assuming $|\nabla^\Sigma u(x)| < 1$, we get
\begin{equation}\label{eq:logSobolev elem ineq}
	\frac{1}{n+1} \log f(x) \leq f(x)^{\frac{1}{n+1}} - 1 \quad \text{and} \quad \sqrt{1 - |\nabla^\Sigma u(x)|^2} \leq 1 -\frac{|\nabla^\Sigma u(x)|^2}{2}.
\end{equation}
Combining this and the last inequality in the chain of estimates for $\Delta u$, we deduce
\begin{equation}\label{eq:Laplacian u in logSobolev}
	\Delta_\Sigma u(x) \leq n\left( f(x)^{\frac{1}{n+1}} - 1 + \frac{|\nabla^\Sigma u|^2}{2}\right) \leq n\left(f(x)^{\frac{1}{n+1}} - \sqrt{1 - |\nabla^\Sigma u|^2}\right).
\end{equation}
The proof of the Lemma is complete.
\end{proof}

We state the following two Lemmas whose proofs are identical to those in \cite[Lemma 4.2 and Lemma 4.4]{Brendle_cpam2023}.

\begin{lemma}\label{likeBrendle_4.2}
For every $0 \leq \sigma < 1$ and $r > 0$, the set
$$\{p \in M: \sigma r < d(x,p) < r \text{ for all } x \in \Sigma\}$$
is a subset of the set
$$\{\Phi_r(x,y,t): (x,y,t) \in A_r  \text{ and } |\nabla^\Sigma u(x)|^2 + |y|^2 + t^2 > \sigma^2\}.$$
\end{lemma}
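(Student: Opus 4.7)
The plan is to follow the ABP-type variational argument used throughout Brendle's work. Given $p \in M$ with $\sigma r < d(x,p) < r$ for every $x \in \Sigma$, I would consider the function
$$F_p : \Sigma \to \mathbb{R}, \qquad F_p(x) \coloneqq r u(x) + \tfrac{1}{2}\, d(x,p)^2,$$
which attains a minimum at some $\bar{x} \in \Sigma$ by compactness of $\Sigma$. The strategy is then to build the triple $(\bar{x},y,t)$ directly out of the initial velocity of a minimizing geodesic from $\bar{x}$ to $p$, and to verify the three required conditions one by one.

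Concretely, pick a minimizing geodesic from $\bar{x}$ to $p$ in $M$ and let $\xi \in T_{\bar{x}}M$ be its initial velocity, scaled so that $\exp_{\bar{x}}(\xi) = p$ and $|\xi| = d(\bar{x},p)$. Decompose orthogonally $\xi = \xi^\top + \xi^\perp$ with $\xi^\top \in T_{\bar{x}}\Sigma$ and $\xi^\perp \in T_{\bar{x}}^\perp \Sigma$, and further split $\xi^\perp = \langle \xi^\perp, H\rangle\, H + \xi^{\perp,0}$ with $\xi^{\perp,0} \in \widetilde{T}_{\bar{x}}^\perp\Sigma$; this uses the hypothesis $|H| = 1$. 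The first-order minimality of $F_p$ at $\bar{x}$, combined with the fact that the Riemannian gradient on $\Sigma$ of $\tfrac{1}{2}\, d(\cdot, p)^2$ at $\bar{x}$ equals $-\xi^\top$, forces
$$r\nabla^\Sigma u(\bar{x}) = \xi^\top.$$
Setting $y \coloneqq \xi^{\perp,0}/r$ and $t \coloneqq \langle \xi^\perp, H\rangle / r$ then yields $\xi = r\nabla^\Sigma u(\bar{x}) + ry + rtH$, so $\Phi_r(\bar{x},y,t) = p$. The Pythagorean identity gives
$$r^2\bigl(|\nabla^\Sigma u(\bar{x})|^2 + |y|^2 + t^2\bigr) = |\xi|^2 = d(\bar{x},p)^2,$$
and the bounds $\sigma r < d(\bar{x},p) < r$ translate at once into both $(\bar{x},y,t) \in U$ and $|\nabla^\Sigma u(\bar{x})|^2 + |y|^2 + t^2 > \sigma^2$. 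Finally, the minimality of $\bar{x}$ reads
$$ru(z) + \tfrac{1}{2}\, d(z,p)^2 \geq ru(\bar{x}) + \tfrac{1}{2}\, r^2\bigl(|\nabla^\Sigma u(\bar{x})|^2 + |y|^2 + t^2\bigr) \qquad \text{for all } z \in \Sigma,$$
which is precisely the $A_r$ membership condition, once $p$ is identified with $\exp_{\bar{x}}(r\nabla^\Sigma u(\bar{x}) + ry + rtH)$.

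The one genuinely delicate point is the differentiability of $d(\cdot, p)^2$ at $\bar{x}$, which may fail if $\bar{x}$ lies in the cut locus of $p$. I would dispose of this exactly as in Brendle's paper: on a manifold of nonnegative sectional curvature the squared distance function is semiconcave, and at a minimum of the sum of the smooth function $ru$ and the semiconcave function $\tfrac{1}{2}\, d(\cdot, p)^2$, the first-order relation $r\nabla^\Sigma u(\bar{x}) = \xi^\top$ still persists for every minimizing initial vector $\xi$. Aside from this cut-locus maneuver, the proof is just bookkeeping of the orthogonal decomposition $T_{\bar{x}}M = T_{\bar{x}}\Sigma \oplus \mathbb{R}H \oplus \widetilde{T}_{\bar{x}}^\perp\Sigma$ and the Pythagorean identity above.
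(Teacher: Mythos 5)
Your proof is correct and follows the same approach the paper invokes: the paper simply defers to Brendle's Lemma 4.2 in \emph{Comm.\ Pure Appl.\ Math.}\ 76 (2023), which is exactly the ABP-type argument you reproduce—minimize $ru(\cdot)+\tfrac12 d(\cdot,p)^2$ over $\Sigma$, decompose the initial velocity $\xi$ of a minimizing geodesic as $\xi^\top+\langle\xi^\perp,H\rangle H+\xi^{\perp,0}$, extract $\xi^\top=r\nabla^\Sigma u(\bar x)$ from first-order minimality (handling the cut locus via the smooth upper support function $z\mapsto ru(z)+\tfrac12|\tilde\xi(z)|^2$), and then read off $U$-membership, the $\sigma^2$-lower bound, and the $A_r$-inequality from the Pythagorean identity $d(\bar x,p)^2=r^2(|\nabla^\Sigma u(\bar x)|^2+|y|^2+t^2)$. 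The only minor imprecision is attributing the upper-support-function (``semiconcavity'') property to nonnegative sectional curvature—pointwise upper smooth support for $d(\cdot,p)^2$ holds in any complete Riemannian manifold—but this does not affect the validity of the argument.
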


\begin{lemma}\label{likeBrendle_4.4}
For every $(x, y, t) \in A_r$, we have
$$g_\Sigma(x) + r (D_\Sigma^2u(x) - \langle\sff(x), y\rangle - t \langle\sff(x), H\rangle) \geq 0.$$
\end{lemma}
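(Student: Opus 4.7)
The strategy is to read the inequality defining $A_r$ as a minimum condition and then differentiate twice. Fix $(x,y,t)\in A_r$ and let $p=\Phi_r(x,y,t)=\exp_x(r\nabla^\Sigma u(x)+ry+rtH)$. Define the function
$$F(z)\coloneqq ru(z)+\tfrac{1}{2}d(z,p)^2,\qquad z\in\Sigma.$$
The membership $(x,y,t)\in A_r$ asserts exactly that $F(z)\geq F(x)$ for every $z\in\Sigma$, so $x$ is a global minimum of $F|_\Sigma$. Consequently the intrinsic Hessian satisfies $D_\Sigma^2 F(x)\geq 0$ as a symmetric bilinear form on $T_x\Sigma$. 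The lemma will follow by expanding this Hessian.

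The first term is straightforward: the contribution of $ru$ to $D_\Sigma^2 F(x)$ is $r\,D_\Sigma^2 u(x)$. For the term $\varphi(z)\coloneqq\tfrac{1}{2}d(z,p)^2$, I will pass between the ambient Hessian $\overline{D}^2\varphi$ and the intrinsic Hessian $D_\Sigma^2\varphi$ via the standard identity
$$D_\Sigma^2\varphi(V,W)=\overline{D}^2\varphi(V,W)+\langle \sff(V,W),(\overline{\nabla}\varphi)^\perp\rangle$$
for $V,W\in T_x\Sigma$, where $(\overline{\nabla}\varphi)^\perp$ denotes the normal component of the ambient gradient. At the point $x$, the ambient gradient of $\tfrac12 d(\cdot,p)^2$ is $-\exp_x^{-1}(p)=-r(\nabla^\Sigma u(x)+y+tH)$; its normal component is $-r(y+tH)$, since $\nabla^\Sigma u(x)\in T_x\Sigma$ while $y,H\in T_x^\perp\Sigma$. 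Hence
$$D_\Sigma^2\varphi(x)=\overline{D}^2\varphi(x)\big|_{T_x\Sigma\times T_x\Sigma}-r\langle\sff(x),y\rangle-rt\langle\sff(x),H\rangle.$$

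The remaining ingredient is to bound $\overline{D}^2\varphi(x)$ from above. Because $(M,g)$ has nonnegative sectional curvature and $p$ is joined to $x$ by a minimizing geodesic (by construction via $\exp_x$), Hessian comparison yields $\overline{D}^2(\tfrac12 d(\cdot,p)^2)\leq g$ at $x$ as bilinear forms; restricting to $T_x\Sigma$ gives $\overline{D}^2\varphi(x)\leq g_\Sigma(x)$. Combining this with the identity just derived and with $D_\Sigma^2 F(x)\geq 0$,
$$0\leq D_\Sigma^2 F(x)\leq g_\Sigma(x)+r\bigl(D_\Sigma^2 u(x)-\langle\sff(x),y\rangle-t\langle\sff(x),H\rangle\bigr),$$
which is exactly the stated inequality.

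The only subtle point is the application of the Hessian comparison estimate, which requires $x\not\in\operatorname{Cut}(p)$; this is where the nonnegative sectional curvature hypothesis on $M$ is actually used, and it is guaranteed at points coming from the exponential map along the distinguished geodesic from $x$ to $p$. Everything else is bookkeeping: a minimum-at-$x$ argument combined with the standard tangential/normal decomposition of the ambient Hessian.
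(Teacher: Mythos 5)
Your high-level approach is sound and leads to the stated inequality, but it differs from the proof in Brendle's Lemma~4.4 (which the paper explicitly refers to), and it elides the one genuinely delicate issue.

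What you do differently. You read $(x,y,t)\in A_r$ as saying that $F(z)=ru(z)+\tfrac12 d(z,p)^2$ has a global minimum at $x$, take $D_\Sigma^2 F(x)\ge 0$, and then split the Hessian of $\tfrac12 d(\cdot,p)^2$ into its ambient restriction plus the second-fundamental-form correction, finally invoking the Hessian comparison theorem $\overline{D}^2\bigl(\tfrac12 d(\cdot,p)^2\bigr)\le g$ under nonnegative sectional curvature. Brendle's argument instead avoids differentiating the distance function: for a geodesic $\tau\mapsto x(\tau)$ in $\Sigma$ with $x(0)=x$ one constructs a one-parameter family of paths $\gamma_\tau$ from $x(\tau)$ to $p$, bounds $d(x(\tau),p)^2$ by $r\,E(\gamma_\tau)$ via Cauchy--Schwarz, and then differentiates the \emph{energy} twice using the index form; nonnegative sectional curvature enters through the curvature term in the second variation. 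The energy formulation is entirely insensitive to whether $d(\cdot,p)^2$ is $C^2$ at $x$, which is exactly where your version is fragile.

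The gap. You need $d(\cdot,p)^2$ to be twice differentiable at $x$, i.e.\ $x\notin\operatorname{Cut}(p)$, to make sense of $\overline D^2\varphi(x)$ and of $\overline\nabla\varphi(x)=-\exp_x^{-1}(p)$. You justify this with ``guaranteed at points coming from the exponential map along the distinguished geodesic from $x$ to $p$,'' which is not correct reasoning: being in the image of $\exp_x$ does not preclude cut points. What \emph{is} true, and what you should have used, is that plugging $z=x$ into the defining inequality of $A_r$ forces $d(x,p)=r\sqrt{|\nabla^\Sigma u(x)|^2+|y|^2+t^2}$, so the geodesic $s\mapsto\exp_x\bigl(s(\nabla^\Sigma u(x)+y+tH)\bigr)$ is minimizing. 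But a minimizing geodesic can still end at a cut point (conjugate point, or non-unique minimizer), and then $d(\cdot,p)^2$ can fail to be $C^2$ at $x$. To make your route rigorous you must insert the standard barrier: replace $\tfrac12 d(\cdot,p)^2$ by the smooth upper support function $\tfrac12\bigl(d(\cdot,q)+d(q,p)\bigr)^2$ with $q=\bar\gamma(r-\varepsilon)$ an interior point of the minimizing geodesic, estimate its Hessian (for which Jacobi-field comparison does apply, since $q\notin\operatorname{Cut}(x)$), and let $\varepsilon\to 0$. Alternatively, simply present the conclusion $D_\Sigma^2 F(x)\ge 0$ in the barrier/viscosity sense and carry the barrier through the computation. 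Once that is done, the sign bookkeeping in your argument --- the decomposition $D_\Sigma^2\varphi=\overline D^2\varphi|_{T_x\Sigma}+\langle\sff,(\overline\nabla\varphi)^\perp\rangle$, the identification of the normal component as $-r(y+tH)$, and the restriction of $\overline D^2\varphi\le g$ to $T_x\Sigma$ --- is correct and gives exactly the stated inequality.
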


The second part of the following Lemma can also be seen from \cite[Proposition 4.6]{Brendle_cpam2023}. For the sake of exposition, we provide its proof since some details in the proof will be analyzed in the next subsection.

\begin{lemma}\label{likeBrendle_4.6}
Let $(x,y,t) \in A_r$. We have
\begin{equation}\label{eq:bound of t}
	1 + r \left(f(x)^{\frac{1}{n+1}} - \sqrt{1 - |\nabla^\Sigma u(x)|^2} - t \right) \geq 0.
\end{equation}
Moreover, the Jacobian determinant of $\Phi_s$ satisfies $\lim_{s \to 0^+} s^{-m}|\det \Phi_s(x,y,t)| = 1$, and the function
$$s \longmapsto \frac{|\det \Phi_s(x,y,t)|}{s^m \left(1 + s \left(f(x)^{\frac{1}{n+1}} - \sqrt{1 - |\nabla^\Sigma u(x)|^2} - t \right)\right)^n}$$
is non-increasing on $(0, r)$.
\end{lemma}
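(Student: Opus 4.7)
The plan for the bound \eqref{eq:bound of t} is to take the trace of the matrix inequality of Lemma \ref{likeBrendle_4.4}. Since $\tr \sff = nH$, $\langle H, y\rangle = 0$ (because $y \in \widetilde{T}_x^\perp\Sigma$), and $|H|^2 = 1$, the trace reads $n + r\bigl(\Delta_\Sigma u(x) - nt\bigr) \geq 0$. Dividing by $n$ and then replacing $\Delta_\Sigma u(x)/n$ by the larger quantity $f(x)^{\frac{1}{n+1}} - \sqrt{1 - |\nabla^\Sigma u(x)|^2}$ provided by Lemma \ref{Estimate of Delta u} yields \eqref{eq:bound of t}.

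For the Jacobian assertions, I would represent $d\Phi_s$ along the geodesic $\gamma(s) = \exp_x\!\bigl(s(\nabla^\Sigma u(x) + y + tH)\bigr)$ by a matrix $A(s)$ of $(n+m)$ Jacobi fields, split into an $n$-dimensional tangential block (from variations of $x$ in $\Sigma$, with $A(0) = \mathrm{Id}_n$ and $A'(0)$ determined by $D_\Sigma^2u(x)$, $\sff(x)$, $y$, and $tH$) and an $m$-dimensional transverse block (from variations of $y \in \widetilde{T}_x^\perp\Sigma$ and of $t$, with $A(0) = 0$ and $A'(0) = \mathrm{Id}_m$). Then $|\det\Phi_s(x,y,t)| = |\det A(s)|$, and because the transverse block contributes a factor $\sim s^m$ while the tangential block tends to the identity as $s \to 0^+$, one obtains $\lim_{s \to 0^+} s^{-m}|\det\Phi_s(x,y,t)| = 1$ at once.

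For the monotonicity on $(0,r)$, the standard approach is to introduce $L(s) := A'(s)A(s)^{-1}$, which satisfies the matrix Riccati equation $L'(s) + L(s)^2 + R(s) = 0$, where $R(s)$ is the Jacobi endomorphism along $\gamma$; nonnegative sectional curvature gives $\tr R(s) \geq 0$. Since $\bigl(\log|\det A(s)|\bigr)' = \tr L(s)$, it suffices to compare $\tr L(s)$ with the derivative of $\log\bigl(s^m(1+s\alpha)^n\bigr)$, where
$$\alpha := f(x)^{\frac{1}{n+1}} - \sqrt{1 - |\nabla^\Sigma u(x)|^2} - t.$$
I would decompose $L$ into its tangential and transverse parts and apply Cauchy–Schwarz on each block: on the transverse block this produces the $m/s$ profile, and on the tangential block, together with the initial-trace control coming from \eqref{eq:bound of t}, it produces the $n\alpha/(1+s\alpha)$ profile.

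The main obstacle is this final block Cauchy–Schwarz plus ODE comparison, where one must simultaneously extract from $\tr L^2$ both the exponent $m$ with profile $1/s$ (transverse) and the exponent $n$ with profile $\alpha/(1+s\alpha)$ (tangential), using \eqref{eq:bound of t} once more to pin down the correct initial data on the tangential block. This is precisely the mechanism carried out in \cite[Proposition 4.6]{Brendle_cpam2023}, and it is what forces the appearance of $f(x)^{\frac{1}{n+1}} - \sqrt{1-|\nabla^\Sigma u(x)|^2}$ in the denominator.
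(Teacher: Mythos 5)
Your argument for (\ref{eq:bound of t}) matches the paper's: take the trace of Lemma \ref{likeBrendle_4.4}, use $\tr\langle\sff,y\rangle=0$ and $\tr\langle\sff,H\rangle=n$ to get $n+r(\Delta_\Sigma u(x)-nt)\geq 0$, and enlarge $\Delta_\Sigma u(x)/n$ to $f(x)^{1/(n+1)}-\sqrt{1-|\nabla^\Sigma u(x)|^2}$ via Lemma \ref{Estimate of Delta u}.

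For the monotonicity you correctly set up the Jacobi field matrix, the symmetric Riccati operator, and $\frac{d}{ds}\log|\det P(s)|=\tr Q(s)$, and it is legitimate to defer the curvature step to \cite[Prop.~4.6]{Brendle_cpam2023}. But the mechanism you describe for producing the factor $(1+s\alpha)^n$ is not the one that works. A block Cauchy--Schwarz applied to the Riccati trace inequality only controls $\tr Q_{tt}+\tr Q_{nn}$ as a sum; because the Riccati equation couples the blocks through the off-diagonal part of $Q$ and through the curvature term, it does not by itself split into an $m/s$ profile on the normal block and an $n\alpha/(1+s\alpha)$ profile on the tangential block. What the paper actually does is: (i) curvature comparison with the flat Jacobi matrix yields the eigenvalue bound $\tr Q(s)\leq \frac{m}{s}+\sum_{i=1}^n\frac{\lambda_i}{1+s\lambda_i}$, where $\lambda_i$ are the eigenvalues of $A=D^2_\Sigma u-\langle\sff,y\rangle-t\langle\sff,H\rangle$; (ii) the arithmetic--harmonic mean inequality on $\sum_i\frac{1}{1+s\lambda_i}$ converts this to $\frac{m}{s}+\frac{\tr A}{1+\frac{s}{n}\tr A}$; (iii) the bound $\tr A\leq n\alpha$, which comes from Lemma \ref{Estimate of Delta u} via (\ref{eq:trace of A}) (not from (\ref{eq:bound of t}), which is merely a positivity consequence of that same bound), together with the monotonicity of $\beta\mapsto\frac{\beta}{1+\frac{s}{n}\beta}$, gives $\tr Q(s)\leq\frac{m}{s}+\frac{n\alpha}{1+s\alpha}$. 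The AM--HM step (ii) is the ingredient your sketch omits, and it is not cosmetic: the rigidity analysis in the following subsection hinges on pinpointing when equality holds there, since that is precisely what forces all eigenvalues of $A$ to coincide and leads to total umbilicity.
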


\begin{proof}
Let us fix a point $(x, y, t) \in A_r$ and denote $A = D_\Sigma^2u(x) - \langle\sff(x), y\rangle - t \langle\sff(x), H\rangle$. We first show (\ref{eq:bound of t}). By Lemma \ref{Estimate of Delta u}, we have
\begin{equation}\label{eq:trace of A}
\tr A = \Delta_\Sigma u(x) - nt \leq n\left(f(x)^{\frac{1}{n+1}} - \sqrt{1 - |\nabla^\Sigma u(x)|^2} - t \right)
\end{equation}
where we have used the fact that $\tr(\langle\sff(x), y\rangle) = 0$ and $\tr(\langle\sff(x), H\rangle) = n$. Since $ g_\Sigma(x) + rA \geq 0$ by Lemma \ref{likeBrendle_4.4}, we take its trace and apply (\ref{eq:trace of A}) to obtain
$$0 \leq n + r\tr A \leq n + nr\left(f(x)^{\frac{1}{n+1}} - \sqrt{1 - |\nabla^\Sigma u(x)|^2} - t \right).$$
This verifies (\ref{eq:bound of t}).

We now prove the second statement of the Lemma. We begin by choosing a local orthonormal frame $\{e_1, \dots, e_n, \nu_1, \dots, \nu_m\}$ in a neighborhood of $x$ in $T^\perp \Sigma$ in a way that $e_i \in T_{x}\Sigma$, $\nu_\alpha \in T^\perp_{x}\Sigma$, and $\langle \overline{\nabla}_{e_i} \nu_\alpha, \nu_\beta \rangle = 0$ at $x$. We let $\gamma(s) = \exp_{x}(s(\nabla^\Sigma u(x) + y + tH))$ for $s \in [0, r]$ and denote by $E_i(s)$, $N_\alpha(s)$ the parallel transports along $\gamma$ of $e_i$, $\nu_\alpha$ respectively. Furthermore, we let $X_i(t)$ and $Y_\alpha$ be the Jacobi fields along $\gamma$ satisfying
\begin{equation*}
\begin{cases*}
	X_i(0) = e_i, \\
	\langle D_t X_i(0), e_j \rangle = A(e_i, e_j)\\
	\langle D_t X_i(0), \nu_\beta \rangle = \langle \sff(e_i, \nabla^\Sigma u(x)), \nu_\beta \rangle
\end{cases*}
\qquad \text{and} \qquad
\begin{cases*}
	Y_\alpha(0) = 0,\\
	D_t Y_\alpha(0) = \nu_\alpha.
\end{cases*}
\end{equation*}
We also denote by $P(s)$ the square matrix-valued function on $[0,r]$ of size $(n+m)$ satisfying
\begin{align*}
P_{ij}(s) = \langle X_i(s), E_j(s) \rangle, \qquad P_{i\beta}(s) = \langle X_i(s), N_\beta(s) \rangle,\\
P_{\alpha j}(s) = \langle Y_\alpha(s),E_j(s) \rangle, \qquad P_{\alpha \beta}(s) = \langle Y_\alpha(s), N_\beta(s) \rangle.
\end{align*}
Following the argument in \cite[p. ~2208--2210]{Brendle_cpam2023}, we have that $\det P(s) > 0$ for every $s \in (0,r)$ and $\lim_{s \to 0^+} s^{-m} \det P(s) = 1$. Moreover, $|\det \Phi_s(x,y,t)| = \det P(s)$ for every $s \in(0,r)$. In addition, the matrix $Q(s) \coloneqq P(s)^{-1}P'(s)$ is symmetric for each $s \in (0,r)$ and its trace satisfies
\begin{equation}\label{eq:trace Q(s)}
\tr Q(s) \leq \frac{m}{s} + \sum\limits_{i = 1}^n \frac{\lambda_i}{1 + s\lambda_i}
\end{equation}
where $\lambda_1, \dots, \lambda_n$ are the eigenvalues of the matrix $A$. Applying the arithmetic-harmonic mean inequality, we find that
$$\sum\limits_{i = 1}^n \frac{1}{1 + s\lambda_i} \geq \frac{n^2}{\sum_{i=1}^{n}(1 + s\lambda_i)} = \frac{n}{1 + \frac{s}{n}\tr A}.$$
Hence, for $s \in (0,r)$, we have
\begin{equation}\label{eq:trace Q(s) 2}
\tr Q(s) \leq \frac{m}{s} + \sum\limits_{i = 1}^n \frac{\lambda_i}{1 + s\lambda_i} = \frac{m}{s} + \frac{1}{s}\left(n - \sum\limits_{i = 1}^n \frac{1}{1 + s\lambda_i} \right) \leq \frac{m}{s} + \frac{\tr A}{1 + \frac{s}{n}\tr A}.
\end{equation}
Combining (\ref{eq:trace of A}) and (\ref{eq:trace Q(s) 2}), we infer that
$$\tr Q(s) \leq \frac{m}{s} + \frac{n\left(f(x)^{\frac{1}{n+1}} - \sqrt{1 - |\nabla^\Sigma u(x)|^2} - t \right)}{1 + s\left(f(x)^{\frac{1}{n+1}} - \sqrt{1 - |\nabla^\Sigma u(x)|^2} - t \right)} \quad \text{for } s \in (0,r).$$
Since $\frac{d}{ds} \log \det P(s) = \tr Q(s)$, it follows from the previous inequality that
$$\frac{d}{ds} \frac{\det P(s)}{s^m \left(1 + s \left(f(x)^{\frac{1}{n+1}} - \sqrt{1 - |\nabla^\Sigma u(x)|^2} - t \right)\right)^n} \leq 0 \quad \text{on } (0,r).$$
The proof of the Lemma is complete.
\end{proof}

As an immediate consequence, we obtain an upper bound for the Jacobian determinant of $\Phi_r$ on $A_r$.

\begin{corollary}\label{Estimate for det(Phi)}
For every $(x,y,t) \in A_r$, we have
\begin{equation}\label{eq:Estimate for det(Phi)}
|\det D\Phi_r(x,y,t)| \leq r^{m}\left(1 + r \left(f(x)^{\frac{1}{n+1}} - \sqrt{1 - |\nabla^\Sigma u(x)|^2} - t \right)\right)^n.
\end{equation}
\end{corollary}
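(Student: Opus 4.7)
The plan is to deduce the estimate as an immediate consequence of Lemma \ref{likeBrendle_4.6}. The key idea is to introduce the quotient
$$F(s) \coloneqq \frac{|\det D\Phi_s(x,y,t)|}{s^m \bigl(1 + s\bigl(f(x)^{\frac{1}{n+1}} - \sqrt{1 - |\nabla^\Sigma u(x)|^2} - t\bigr)\bigr)^n}$$
defined for $s \in (0, r)$. The affine factor $1 + s\bigl(f(x)^{\frac{1}{n+1}} - \sqrt{1 - |\nabla^\Sigma u(x)|^2} - t\bigr)$ equals $1$ at $s=0$ and, by (\ref{eq:bound of t}), is non-negative at $s = r$, hence remains strictly positive on $(0, r)$, so $F$ is well-defined there.

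Next I would observe that $\lim_{s \to 0^+} F(s) = 1$. This is immediate from the statement in Lemma \ref{likeBrendle_4.6} that $s^{-m}|\det D\Phi_s(x,y,t)| \to 1$ as $s \to 0^+$, combined with the fact that the denominator's affine factor raised to the $n$-th power tends to $1$ as $s \to 0^+$. Now the monotonicity statement of Lemma \ref{likeBrendle_4.6} tells us that $F$ is non-increasing on $(0, r)$, so $F(s) \leq \lim_{s' \to 0^+} F(s') = 1$ for every $s \in (0, r)$. Rearranging, this gives
$$|\det D\Phi_s(x,y,t)| \leq s^{m}\bigl(1 + s \bigl(f(x)^{\frac{1}{n+1}} - \sqrt{1 - |\nabla^\Sigma u(x)|^2} - t \bigr)\bigr)^n$$
for every $s \in (0, r)$.

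Finally I would pass to the limit $s \to r^{-}$. Both $|\det D\Phi_s(x,y,t)|$ and $s^m\bigl(1 + s(\cdot)\bigr)^n$ are continuous in $s$ on $(0, r]$, so the inequality extends to $s = r$ and yields exactly (\ref{eq:Estimate for det(Phi)}). Since the argument is a pure algebraic manipulation of the two conclusions of Lemma \ref{likeBrendle_4.6}, I do not anticipate any substantive obstacle; the only mild technicality is the boundary passage $s \to r^-$, which is handled by continuity regardless of whether the affine factor $1 + r(\cdot)$ is strictly positive or vanishes (in the degenerate vanishing case the right-hand side is $0$ and the bound forces $|\det D\Phi_r(x,y,t)| = 0$ by the same limiting argument).
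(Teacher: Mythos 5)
Your proof is correct and is exactly the "immediate consequence" the paper has in mind: the quotient $F(s)$ is non-increasing on $(0,r)$ with $\lim_{s\to 0^+}F(s)=1$, so $F(s)\le 1$ throughout $(0,r)$, and continuity in $s$ extends the resulting bound to $s=r$, including the degenerate case where the affine factor vanishes at $r$. The paper provides no separate proof for this corollary, so your argument supplies precisely what is implicit there.
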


From (\ref{eq:bound of t}), we note that for $(x,y,t) \in A_r$, the range of $t$ is given by
\begin{equation}\label{eq:range of t}
-\sqrt{1 - |\nabla^\Sigma u(x)|^2} < t \leq f(x)^{\frac{1}{n+1}} - \sqrt{1 - |\nabla^\Sigma u(x)|^2} + \frac{1}{r}.
\end{equation}
To continue with the proof of (\ref{eq:logSobolev all m}), we apply Lemma \ref{likeBrendle_4.2}, Corollary \ref{Estimate for det(Phi)}, and the area formula to get
\begin{align*}
& |\{p \in M: \sigma r < d(x,p) < r \text{ for all } x \in \Sigma\}|\\
& \leq \int_\Omega \int_{-\sqrt{1 - |\nabla^\Sigma u|^2}}^{f(x)^{\frac{1}{n+1}} - \sqrt{1 - |\nabla^\Sigma u|^2} + \frac{1}{r}} \int_{\{y \in \widetilde{T}_x^\perp \Sigma:\; \sigma^2 < |\nabla^\Sigma u|^2 + |y|^2 + t^2 < 1\}}\\
&\qquad \qquad \qquad \qquad \qquad \qquad \qquad |\det D\Phi_r(x,y,t)|\, 1_{A_r}(x,y,t)\, dy\, dt\, d\vol_\Sigma(x)\\
& \leq \int_\Omega \int_{-\sqrt{1 - |\nabla^\Sigma u|^2}}^{f(x)^{\frac{1}{n+1}} - \sqrt{1 - |\nabla^\Sigma u|^2} + \frac{1}{r}} \int_{\{y \in \widetilde{T}_x^\perp \Sigma:\; \sigma^2 < |\nabla^\Sigma u|^2 + |y|^2 + t^2 < 1\}}\\
&\qquad \qquad \qquad r^{m}\left(1 + r \left(f(x)^{\frac{1}{n+1}} - \sqrt{1 - |\nabla^\Sigma u(x)|^2} - t \right)\right)^n \, dy\, dt\, d\vol_\Sigma(x).
\end{align*}
Moreover, by the elementary inequality $b^\frac{m-1}{2} - a^\frac{m-1}{2} \leq \frac{m-1}{2}(b-a)$ for $0 \leq a \leq b < 1$ and $m \geq 3$, for every $x \in \Omega$, we have
\begin{align*}
|\{y \in &T_x^\perp \Sigma : \sigma^2 \leq |\nabla^\Sigma u|^2 + |y|^2 + t^2 < 1\}|\\
& \leq |\mathbb B^{m-1}| \left((1 - |\nabla^\Sigma u|^2 - t^2)^\frac{m-1}{2}_+ - (\sigma^2 - |\nabla^\Sigma u|^2 - t^2)^\frac{m-1}{2}_+ \right)\\
& \leq \frac{m-1}{2} |\mathbb B^{m-1}|(1-\sigma^2).
\end{align*}
Therefore, we continue the chain of integral estimates by
\begin{align*}
|\{p & \in M:\sigma r < d(x,p) < r \text{ for all } x \in \Sigma\}|\\
& \leq \frac{m-1}{2} |\mathbb B^{m-1}|(1-\sigma^2)\int_\Omega \int_{-\sqrt{1 - |\nabla^\Sigma u|^2}}^{f(x)^{\frac{1}{n+1}} - \sqrt{1 - |\nabla^\Sigma u|^2} + \frac{1}{r}}\\
& \qquad \qquad \qquad r^{m}\left(1 + r \left(f(x)^{\frac{1}{n+1}} - \sqrt{1 - |\nabla^\Sigma u(x)|^2} - t \right)\right)^n \, dt\, d\vol_\Sigma(x)\\
& = \frac{m-1}{2} |\mathbb B^{m-1}|(1-\sigma^2) \int_\Omega \frac{r^{m-1}}{n+1} \left(1 + rf(x)^{\frac{1}{n+1}}\right)^{n+1} d\vol_\Sigma(x).
\end{align*}
Dividing the above inequality by $r^{n+m}$ and passing $r \to \infty$, we conclude that
$$\theta |\mathbb B^{n+m}| (1 - \sigma^{n+m}) \leq \frac{m-1}{2(n+1)} |\mathbb B^{m-1}|(1-\sigma^2) \int_\Omega f(x) \, d\vol_\Sigma(x).$$
Finally, we divide the previous inequality by $1 - \sigma$ and let $\sigma \to 1$ to obtain
\begin{equation}\label{eq:logSobolev finished}
\theta(n+m) |\mathbb B^{n+m}| \leq \frac{(m-1)|\mathbb B^{m-1}|}{n+1}\int_\Omega f \leq \frac{(m-1)|\mathbb B^{m-1}|}{n+1}\int_\Sigma f, 
\end{equation}
which coincides with (\ref{eq:goal of proof}). This finishes the proof of (\ref{eq:logSobolev all m}) when $\Sigma$ is connected.

Now, we suppose that $\Sigma$ is disconnected. Since (\ref{eq:logSobolev all m}) holds on each individual of connected component of $\Sigma$, we take the sum over them and use the elementary inequality
$$a \log a + b\log b < (a+b)\log(a+b) \quad \text{for all } a, b > 0$$
to finish the proof in this case.

\subsection{On the equality cases}
In this subsection, we analyze the equality case of (\ref{eq:logSobolev all m}). To begin with, we assume that $f$ is a positive smooth function which satisfies
\begin{align}
\int_\Sigma f\left(\log f + \log\left((n+1)\frac{|\mathbb S^{n+m-1}|}{|\mathbb S^{m-2}|}\right) + \log \theta \right) - & \left(\int_\Sigma f \right) \log \left(\int_\Sigma f \right) \nonumber \\
& \qquad = \frac{n+1}{2n^2} \int_\Sigma \frac{|\nabla^\Sigma f|^2}{f}. \label{eq:logSobolev rigidity}
\end{align}
From the last paragraph in the proof of (\ref{eq:logSobolev all m}), we infer that $\Sigma$ is connected. Since (\ref{eq:logSobolev rigidity}) is invariant under multiplication of $f$ by a positive constant, we assume further without loss of generality that $f$ satisfies
\begin{equation}\label{eq:scaling in logSobolev rigidity}
	\frac{n}{n+1} \int_\Sigma f \log f = \frac{1}{2n} \int_\Sigma \frac{|\nabla^\Sigma f|^2}{f}.
\end{equation}
By (\ref{eq:logSobolev rigidity}), the scaling of $f$ implies
\begin{equation}\label{eq:logSobolev rigidity goal}
	\theta (n+1)\frac{|\mathbb S^{n+m-1}|}{|\mathbb S^{m-2}|} = \int_\Sigma f.
\end{equation}
Since $f > 0$ and $\Sigma$ is connected, by (\ref{eq:scaling in logSobolev rigidity}), there exists (uniquely modulo a constant) a smooth solution $u$ to the equation
$$\div(f\,\nabla^\Sigma u) = \frac{n}{n+1} f \log f - \frac{1}{2n} \frac{|\nabla^\Sigma f|^2}{f} \quad \text{on }\, \Sigma.$$
We define the sets $\Omega$,  $U$, $A_r$, and the map $\Phi_r$ as in the proof of (\ref{eq:logSobolev all m}).
It follows from (\ref{eq:logSobolev finished}) and (\ref{eq:logSobolev rigidity goal}) that $\Omega$ is dense in $\Sigma$.

\begin{lemma}\label{det DPhi in equality case}
For every $r > 0$, $x \in \Omega$, $y \in \widetilde{T}_x^\perp\Sigma$ and $t \in [-1, 1]$ satisfying $|\nabla^\Sigma u(x)|^2 + |y|^2 + t^2 = 1$, we have
$$|\det D\Phi_r(x,y,t)| \geq r^{m}\left(1 + r \left(f(x)^{\frac{1}{n+1}} - \sqrt{1 - |\nabla^\Sigma u(x)|^2} - t \right)\right)^n.$$
\end{lemma}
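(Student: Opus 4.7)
The plan is to prove the lemma by contradiction, leveraging the fact that the equality (\ref{eq:logSobolev rigidity}) (normalized to (\ref{eq:logSobolev rigidity goal})) forces every intermediate estimate in the proof of (\ref{eq:logSobolev all m}) to be sharp in an appropriate asymptotic sense. I would assume toward contradiction that the conclusion fails at some $(x_*, y_*, t_*)$ satisfying the stated hypotheses. Setting $\lambda_* := f(x_*)^{\frac{1}{n+1}} - \sqrt{1 - |\nabla^\Sigma u(x_*)|^2} - t_*$, this means
$$|\det D\Phi_r(x_*, y_*, t_*)| \leq r^m(1 + r\lambda_*)^n - \eta$$
for some $\eta > 0$. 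By smoothness of $\Phi_r$ and continuity of its Jacobian, this strict inequality (with margin $\eta/2$, say) persists on a small open neighborhood $V$ of $(x_*, y_*, t_*)$ in $\widetilde{T}^\perp\Sigma \times \mathbb R$ chosen to meet the unit sphere $|\nabla^\Sigma u|^2 + |y|^2 + t^2 = 1$ transversally.

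Next, I would retrace the chain of integral estimates leading to (\ref{eq:logSobolev finished}). The only step affected is the pointwise Jacobian upper bound from Corollary \ref{Estimate for det(Phi)}, which we strengthen by subtracting $\eta/2$ on $V \cap A_r$. Tracking this improvement through the subsequent integration in $y$ against the spherical shell estimate, the integration in $t$, the division by $r^{n+m}$ and limit $r \to \infty$, and finally the division by $1-\sigma$ and limit $\sigma \to 1^-$, the plan is to obtain a strict version of (\ref{eq:logSobolev finished}), namely
$$\theta\,(n+1)\,\frac{|\mathbb S^{n+m-1}|}{|\mathbb S^{m-2}|} < \int_\Sigma f,$$
which contradicts (\ref{eq:logSobolev rigidity goal}).

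The main obstacle is ensuring that the $\eta/2$ deficit does not vanish under these successive divisions and limits. The critical quantity to control is the measure of $V \cap A_r \cap \{\sigma^2 < |\nabla^\Sigma u|^2 + |y|^2 + t^2 < 1\}$. Since $V$ crosses the unit sphere transversally, the raw shell measure $|V \cap \{\sigma^2 < |\nabla^\Sigma u|^2 + |y|^2 + t^2 < 1\}|$ is of order $1-\sigma$, so after dividing by $1-\sigma$ the deficit contributed by $V$ stays of order one. It remains to show that $A_r$ covers a comparable portion of $V \cap \text{shell}_\sigma$, which is where the equality hypothesis is used a second time: the area-formula step in the proof of (\ref{eq:logSobolev finished}) must become an equality in the limit, forcing $\Phi_r$ restricted to $A_r$ to cover the annulus $\{p \in M: \sigma r < d(x,p) < r \text{ for all } x \in \Sigma\}$ essentially bijectively, and hence $A_r$ to exhaust $U$ up to a negligible set in the asymptotic regime. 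Combining these measure bounds with the uniform deficit on $V \cap A_r$ then produces the required strict inequality and completes the contradiction.
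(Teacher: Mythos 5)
Your overall strategy (contradiction from the saturated inequality, localizing the failure to a neighborhood $V$ of the bad point, and retracing the chain of estimates) matches the paper's. But there are two substantive problems with the way you propose to propagate and quantify the Jacobian deficit, and together they are fatal to the argument as written.

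\textbf{The additive deficit dies in the $r\to\infty$ limit.} You take an additive deficit $\eta/2$ at a single radius $r_0$ and then propose to ``subtract $\eta/2$ on $V\cap A_r$'' for all larger $r$. Even granting that the deficit persists for all $r$, the extra term it generates in the integral inequality is
\[
-\frac{\eta}{2}\,\bigl|V\cap A_r\cap\{\sigma^2<|\nabla^\Sigma u|^2+|y|^2+t^2<1\}\bigr|,
\]
which is bounded by $\frac{\eta}{2}|V|$ uniformly in $r$. After dividing by $r^{n+m}$ and letting $r\to\infty$ --- a step you explicitly keep --- this contribution goes to zero. You identify ``the $\eta/2$ deficit does not vanish'' as the obstacle but then address only the $\sigma\to1$ scaling; the $r$-scaling is left completely untreated, and it kills the argument. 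The paper avoids this by working with a \emph{multiplicative} deficit: by continuity there is $\varepsilon\in(0,1)$ with $|\det D\Phi_{r_0}|<(1-\varepsilon)r_0^m(1+r_0\lambda)^n$ on $V$, so the extra term is
\[
-\varepsilon\int\!\!\int\!\!\int 1_V\, r^m\bigl(1+r\lambda\bigr)^n\,dy\,dt\,d\vol_\Sigma,
\]
which scales like $r^{n+m}$ and therefore survives division by $r^{n+m}$ and the passage $r\to\infty$, producing the nontrivial term $\varepsilon\lim_{r\to\infty}I(\sigma,r)\geq \varepsilon J(\sigma)$.

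\textbf{Nothing in your argument propagates the deficit from $r_0$ to $r>r_0$.} You never invoke the monotonicity part of Lemma \ref{likeBrendle_4.6}, which says that $s\mapsto |\det D\Phi_s|/\bigl(s^m(1+s\lambda)^n\bigr)$ is non-increasing on $(0,r)$ for $(x,y,t)\in A_r$. This is exactly the mechanism the paper uses to upgrade the local bound at $r_0$ to a bound for every $r>r_0$ on $V\cap A_r$, and it only transports the \emph{ratio}, not an additive gap. Without it there is no reason the improved Jacobian bound holds at any $r\neq r_0$.

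\textbf{The ``$A_r$ covers enough of $V$'' concern is a red herring.} The paper does not need $A_r$ to exhaust $U$ or to cover $V\cap\mathrm{shell}_\sigma$. After establishing the pointwise bound $|\det D\Phi_r|\,1_{A_r}\leq(1-\varepsilon 1_V)\,r^m(1+r\lambda)^n$ (using that the right-hand side is nonnegative on the relevant range of $t$), one simply integrates the right-hand side over the full shell and drops the indicator $1_{A_r}$. The deficit term is then an integral of $1_V\,r^m(1+r\lambda)^n$ that carries no $A_r$-dependence, so the coverage question you raise never arises. Redirecting effort toward proving that $\Phi_r|_{A_r}$ becomes essentially bijective would be an unnecessary (and considerably harder) detour.

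In short, the fix is: replace the additive margin by a multiplicative one $(1-\varepsilon)$ near $(x_*,y_*,t_*)$, use the monotonicity statement from Lemma \ref{likeBrendle_4.6} to carry it to all $r>r_0$ on $A_r$, and observe that the resulting deficit integral scales like $r^{n+m}(1-\sigma)$, so it survives both limits. This is precisely the paper's route.
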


\begin{proof}
We prove the Lemma by a contradiction argument. To do this, we suppose on the contrary that the statement does not hold at $x_0 \in \Omega$, $y_0 \in \widetilde{T}_{x_0}^\perp\Sigma$, $t_0 \in [-1, 1]$ satisfying $|\nabla^\Sigma u(x_0)|^2 + |y_0|^2 + t_0^2 = 1$ for some $r_0 > 0$. Then, by continuity, there exist $0 < \varepsilon < 1$ and a neighborhood $V$ of $(x_0, y_0, t_0)$ with the property that
$$|\det D\Phi_{r_0}(x,y,t)| < (1 - \varepsilon)r_0^{m}\left(1 + r_0 \left(f(x)^{\frac{1}{n+1}} - \sqrt{1 - |\nabla^\Sigma u(x)|^2} - t \right)\right)^n \text{ on } V.$$
By Lemma \ref{likeBrendle_4.6}, it follows that for each $r > r_0$,
$$|\det D\Phi_{r}(x,y,t)| < (1 - \varepsilon)r^{m}\left(1 + r \left(f(x)^{\frac{1}{n+1}} - \sqrt{1 - |\nabla^\Sigma u(x)|^2} - t \right)\right)^n \text{ on } V\cap A_r.$$
From this, for $r > r_0$ and $0 < \sigma < 1$, we apply Lemma \ref{likeBrendle_4.2}, the area formula and (\ref{eq:range of t}) to infer that
\begin{align*}
	& |\{p \in M: \sigma r < d(x,p) < r \text{ for all } x \in \Sigma\}|\\
	& \leq \int_\Omega \int_{-\sqrt{1 - |\nabla^\Sigma u|^2}}^{f(x)^{\frac{1}{n+1}} - \sqrt{1 - |\nabla^\Sigma u|^2} + \frac{1}{r}} \int_{\{y \in \widetilde{T}_x^\perp \Sigma:\; \sigma^2 < |\nabla^\Sigma u|^2 + |y|^2 + t^2 < 1\}}\\
	&\qquad \qquad \qquad \qquad \qquad \qquad \qquad \qquad \quad |\det D\Phi_r(x,y,t)|\, 1_{A_r}(x,y,t)\, dy\, dt\, d\vol_\Sigma(x)\\
	& \leq \int_\Omega \int_{-\sqrt{1 - |\nabla^\Sigma u|^2}}^{f(x)^{\frac{1}{n+1}} - \sqrt{1 - |\nabla^\Sigma u|^2} + \frac{1}{r}} \int_{\{y \in \widetilde{T}_x^\perp \Sigma:\; \sigma^2 < |\nabla^\Sigma u|^2 + |y|^2 + t^2 < 1\}}\\
	&\qquad (1 - \varepsilon 1_V(x,y,t)) r^{m}\left(1 + r \left(f(x)^{\frac{1}{n+1}} - \sqrt{1 - |\nabla^\Sigma u(x)|^2} - t \right)\right)^n \, dy\, dt\, d\vol_\Sigma(x).
\end{align*}
Similarly to the proof of Theorem \ref{logSobolev}, we continue the chain of estimates by
\begin{align*}
& |\{p \in M: \sigma r < d(x,p) < r \text{ for all } x \in \Sigma\}|\\
& \leq \frac{m-1}{2} |\mathbb B^{m-1}|(1-\sigma^2) \int_\Omega \frac{r^{m-1}}{n+1} \left(1 + rf(x)^{\frac{1}{n+1}}\right)^{n+1} d\vol_\Sigma(x)\\
& \qquad - \varepsilon \int_\Omega \int_{-\sqrt{1 - |\nabla^\Sigma u|^2}}^{f(x)^{\frac{1}{n+1}} - \sqrt{1 - |\nabla^\Sigma u|^2} + \frac{1}{r}} \int_{\{y \in \widetilde{T}_x^\perp \Sigma:\; \sigma^2 < |\nabla^\Sigma u|^2 + |y|^2 + t^2 < 1\}}\\
&\qquad \qquad \quad 1_V(x,y,t) r^{m}\left(1 + r \left(f(x)^{\frac{1}{n+1}} - \sqrt{1 - |\nabla^\Sigma u(x)|^2} - t \right)\right)^n \, dy\, dt\, d\vol_\Sigma(x).
\end{align*}
Dividing the above inequality by $r^{n+m}$ and passing to the limit as $r \to \infty$, we obtain
\begin{equation}\label{eq:divide by r^{n+m} equality case}
\theta |\mathbb B^{n+m}|(1 - \sigma^{n+m}) \leq \frac{m-1}{2(n+1)} |\mathbb B^{m-1}|(1-\sigma^2) \int_\Omega f(x) \, d\vol_\Sigma(x) - \varepsilon \lim\limits_{r \to \infty} I(\sigma, r)
\end{equation}
where
\begin{align*}
I(\sigma, r) \coloneqq &\int_\Omega \int_{-\sqrt{1 - |\nabla^\Sigma u|^2}}^{f(x)^{\frac{1}{n+1}} - \sqrt{1 - |\nabla^\Sigma u|^2} + \frac{1}{r}} \int_{\{y \in \widetilde{T}_x^\perp \Sigma:\; \sigma^2 < |\nabla^\Sigma u|^2 + |y|^2 + t^2 < 1\}}\\
&\qquad \qquad \quad 1_V(x,y,t) \left(f(x)^{\frac{1}{n+1}} - \sqrt{1 - |\nabla^\Sigma u(x)|^2} - t \right)^n \, dy\, dt\, d\vol_\Sigma(x)\\
& \geq \int_\Omega \int_{-\sqrt{1 - |\nabla^\Sigma u|^2}}^{f(x)^{\frac{1}{n+1}} - \sqrt{1 - |\nabla^\Sigma u|^2}} \int_{\{y \in \widetilde{T}_x^\perp \Sigma:\; \sigma^2 < |\nabla^\Sigma u|^2 + |y|^2 + t^2 < 1\}}\\
&\qquad \qquad \quad 1_V(x,y,t) \left(f(x)^{\frac{1}{n+1}} - \sqrt{1 - |\nabla^\Sigma u(x)|^2} - t \right)^n \, dy\, dt\, d\vol_\Sigma(x)\\
& \eqqcolon J(\sigma).
\end{align*}
Noting that $\lim_{\sigma \to 1} \frac{J(\sigma)}{1-\sigma} > 0$, we divide (\ref{eq:divide by r^{n+m} equality case}) by $1 - \sigma$ and pass to the limit as $\sigma \to 1$ to conclude that
\begin{equation}\label{eq:logSobolev equality case det(DPhi) finished}
\theta(n+m) |\mathbb B^{n+m}| < \frac{(m-1)|\mathbb B^{m-1}|}{n+1}\int_\Omega f \leq \frac{(m-1)|\mathbb B^{m-1}|}{n+1}\int_\Sigma f
\end{equation}
which contradicts (\ref{eq:logSobolev rigidity goal}). This finishes the proof of the Lemma.
\end{proof}

To continue, we fix a triplet $(x,y,t)$ where $x \in \Omega$, $y \in \widetilde{T}_x^\perp\Sigma$ and $t \in [-1, 1]$ satisfying $|\nabla^\Sigma u(x)|^2 + |y|^2 + t^2 = 1$. We define the matrix $A = D_\Sigma^2u(x) - \langle\sff(x), y\rangle - t \langle\sff(x), H\rangle$. There exists a real number $s_0 > 0$ such that the matrix $g_\Sigma + s A$ is positive definite for every $s \in (0, s_0)$. Since $x \in \Omega$, by Lemma \ref{Estimate of Delta u}, we have
\begin{equation}\label{eq:trace of A equality case}
\tr A = \Delta_\Sigma u(x) - nt \leq n\left(f(x)^{\frac{1}{n+1}} - \sqrt{1 - |\nabla^\Sigma u(x)|^2} - t \right).
\end{equation}

Next, we choose a local orthonormal frame $\{e_1, \dots, e_n, \nu_1, \dots, \nu_m\}$ in a neighborhood of $x$ in $T^\perp \Sigma$ in a way that $e_i \in T_{x}\Sigma$, $\nu_\alpha \in T^\perp_{x}\Sigma$, and $\langle \overline{\nabla}_{e_i} \nu_\alpha, \nu_\beta \rangle = 0$ at $x$. We let $\gamma(s) = \exp_{x}(s(\nabla^\Sigma u(x) + y + tH))$ for $s \in (0, s_0)$ and denote by $E_i(s)$, $N_\alpha(s)$ the parallel transports along $\gamma$ of $e_i$, $\nu_\alpha$ respectively. Furthermore, we let $X_i(t)$ and $Y_\alpha$ be the Jacobi fields along $\gamma$ satisfying
\begin{equation*}
	\begin{cases*}
		X_i(0) = e_i, \\
		\langle D_t X_i(0), e_j \rangle = A(e_i, e_j)\\
		\langle D_t X_i(0), \nu_\beta \rangle = \langle \sff(e_i, \nabla^\Sigma u(x)), \nu_\beta \rangle
	\end{cases*}
	\qquad \text{and} \qquad
	\begin{cases*}
		Y_\alpha(0) = 0,\\
		D_t Y_\alpha(0) = \nu_\alpha.
	\end{cases*}
\end{equation*}
We also denote by $P(s)$ the square matrix-valued function on $[0, s_0)$ of size $(n+m)$ satisfying
\begin{align*}
	P_{ij}(s) = \langle X_i(s), E_j(s) \rangle, \qquad P_{i\beta}(s) = \langle X_i(s), N_\beta(s) \rangle,\\
	P_{\alpha j}(s) = \langle Y_\alpha(s),E_j(s) \rangle, \qquad P_{\alpha \beta}(s) = \langle Y_\alpha(s), N_\beta(s) \rangle.
\end{align*}
Then, by Lemma \ref{det DPhi in equality case}, we have
\begin{equation*}
|\det P(s)| = |\det D\Phi_s(x, y, t)| \geq  s^{m}\left(1 + s \left(f(x)^{\frac{1}{n+1}} - \sqrt{1 - |\nabla^\Sigma u(x)|^2} - t \right)\right)^n
\end{equation*}
for all $s \in (0, s_0)$. Since $\det P(s) > 0$ for sufficiently small $s >0$, we find that
\begin{equation}\label{eq:det P(s) in equality case}
\det P(s) \geq  s^{m}\left(1 + s \left(f(x)^{\frac{1}{n+1}} - \sqrt{1 - |\nabla^\Sigma u(x)|^2} - t \right)\right)^n \quad \text{for } s \in (0, s_0).
\end{equation}
Similarly to the proof of Lemma \ref{likeBrendle_4.6}, the matrix $Q(s) \coloneqq P(s)^{-1}P'(s)$ is symmetric for each $s \in (0,s_0)$ and its trace satisfies
\begin{equation}\label{eq:trace Q(s) equality case}
	\tr Q(s) \leq \frac{m}{s} + \sum\limits_{i = 1}^n \frac{\lambda_i}{1 + s\lambda_i}
\end{equation}
where $\lambda_1, \dots, \lambda_n$ are the eigenvalues of the matrix $A$. Applying the arithmetic-harmonic mean inequality, we find that
\begin{equation}\label{eq:HM-AM equality case}
\sum\limits_{i = 1}^n \frac{1}{1 + s\lambda_i} \geq \frac{n^2}{\sum_{i=1}^{n}(1 + s\lambda_i)} = \frac{n}{1 + \frac{s}{n}\tr(A)}.
\end{equation}
Hence, for $s \in (0,s_0)$, we have
\begin{equation}\label{eq:trace Q(s) 2 equality case}
\tr Q(s) \leq \frac{m}{s} + \sum\limits_{i = 1}^n \frac{\lambda_i}{1 + s\lambda_i} = \frac{m}{s} + \frac{1}{s}\left(n - \sum\limits_{i = 1}^n \frac{1}{1 + s\lambda_i} \right) \leq \frac{m}{s} + \frac{\tr A}{1 + \frac{s}{n}\tr A}.
\end{equation}
Combining (\ref{eq:trace of A equality case}) and (\ref{eq:trace Q(s) 2 equality case}), we infer that
$$\tr Q(s) \leq \frac{m}{s} + \frac{n\left(f(x)^{\frac{1}{n+1}} - \sqrt{1 - |\nabla^\Sigma u(x)|^2} - t \right)}{1 + s\left(f(x)^{\frac{1}{n+1}} - \sqrt{1 - |\nabla^\Sigma u(x)|^2} - t \right)} \quad \text{for } s \in (0,s_0).$$
Since $\frac{d}{ds} \log \det P(s) = \tr Q(s)$, it follows that
\begin{equation*}
\det P(s) \leq s^{m}\left(1 + s \left(f(x)^{\frac{1}{n+1}} - \sqrt{1 - |\nabla^\Sigma u(x)|^2} - t \right)\right)^n \quad \text{for } s \in (0, s_0).
\end{equation*}
Recalling (\ref{eq:det P(s) in equality case}), we deduce that
\begin{equation*}
\det P(s) = s^{m}\left(1 + s \left(f(x)^{\frac{1}{n+1}} - \sqrt{1 - |\nabla^\Sigma u(x)|^2} - t \right)\right)^n \quad \text{for } s \in (0, s_0).
\end{equation*}
Therefore, we have
\begin{equation}\label{eq:trace Q(s) equality}
\tr Q(s) = \frac{m}{s} + \frac{n\left(f(x)^{\frac{1}{n+1}} - \sqrt{1 - |\nabla^\Sigma u(x)|^2} - t \right)}{1 + s\left(f(x)^{\frac{1}{n+1}} - \sqrt{1 - |\nabla^\Sigma u(x)|^2} - t \right)} \quad \text{for } s \in (0,s_0).
\end{equation}
This implies that the equalities must hold in (\ref{eq:trace of A equality case}) and (\ref{eq:trace Q(s) 2 equality case}). Moreover, in view of (\ref{eq:logSobolev elem ineq}), the equality in (\ref{eq:trace of A equality case}) gives $f(x) = 1$ and $\nabla^\Sigma u(x) = 0$. Since $x \in \Omega$ is arbitrarily chosen and $\Omega$ is dense in $\Sigma$, we conclude that $f \equiv 1$ and $u$ is a constant on $\Sigma$. On the other hand, the equalities in (\ref{eq:trace Q(s) 2 equality case}) imply that the equality in (\ref{eq:HM-AM equality case}) holds. This means that all the eigenvalues $\lambda_i$ of the matrix $A$ have the same value. Since $u$ is a constant on $\Sigma$ and $(y, t)$ is arbitrarily chosen only to satisfy $|y|^2 + t^2 = 1$, we conclude that $\Sigma$ is totally umbilical. The proof of the main result is complete.

\begin{bibdiv}
\begin{biblist}


\bib{Beckner_Annals1993}{article}{
	title={Sharp Sobolev inequalities on the sphere and the Moser-Trudinger inequality},
	author={W. Beckner}
	journal={Ann. of Math. (2)},
	volume={138},
	number={1}
	date={1993},
	pages={213-242}
}

\bib{Brendle_cpam2022}{article}{
	title={The logarithmic Sobolev inequality for a submanifold in Euclidean space},
	author={Brendle, Simon},
	journal={Comm. Pure Appl. Math.},
	volume={75},
	date={2022},
	pages={449–454}
}

\bib{Brendle_cpam2023}{article}{
	title={Sobolev inequalities in manifolds with nonnegative curvature},
	author={Brendle, Simon},
	journal={Comm. Pure Appl. Math.},
	volume={76},
	date={2023},
	pages={2192-2218}
}

\bib{Dong-Lin-Lu_ActaSci2024}{article}{
	title={The logarithmic Sobolev inequality for a submanifold in manifolds with asymptotically nonnegative sectional curvature},
	author={Dong, Yuxin},
	author={Lin, Hezi}
	author={Lu, Lingen}
	journal={Acta Math. Sci.},
	volume={44},
	date={2024},
	pages={189–194}
}
	
\bib{Lee-Ricci_jga2024}{article}{
	title={The Log-Sobolev inequality for a submanifold in manifolds with asymptotic non-negative intermediate Ricci curvature},
	author={Lee, Jihye},
	author={Ricci, Fabio}
	journal={J. Geom. Anal.},
	volume={34},
	date={2024},
}

\bib{Pham_imrn2024}{article}{
	title={Some geometric inequalities by the ABP method},
	author={Pham, Doanh},
	journal={Int. Math. Res. Not. IMRN},
	volume={2024}
	number={6}
	date={2024},
	pages={4656–4688}
}

\bib{Yi-Zheng_ChineseAnnals2024}{article}{
	title={The logarithmic Sobolev inequality for a submanifold in manifolds with nonnegative sectional curvature},
	author={Yi, Chengyang}
	author={Zheng,  Yu}
	journal={Chin. Ann. Math. Ser. B},
	volume={45},
	date={2024},
	pages={487–496}
}

\end{biblist}
\end{bibdiv}
	
\end{document}